\def\BibTeX{{\rm B\kern-.05em{\sc i\kern-.025em b}\kern-.08em
    T\kern-.1667em\lower.7ex\hbox{E}\kern-.125emX}}
\let\NAT@parse\undefined
\newcommand{\sign}{\text{sign}}
\newtheorem{theorem}{Theorem}
\newtheorem{assumption}{Assumption}
\newtheorem{lemma}{Lemma}
\newcommand{\R}{\mathbb{R}}
\title{\LARGE \bf
Quantized and Distributed Subgradient Optimization Method with Malicious Attack
\author{Iyanuoluwa Emiola and Chinwendu Enyioha
\thanks{The authors are with the department of Electrical and Computer Engineering,
        University of Central Florida, Orlando Florida 32816, USA
        {\tt\small iemiola@knights.ucf.edu}, {\tt\small cenyioha@ucf.edu}}%
}

}
\begin{document}

\maketitle
\thispagestyle{plain}
\pagestyle{plain}

\begin{abstract}
This paper considers a distributed optimization problem in a multi-agent system where a fraction of the agents act in an adversarial manner. Specifically, the malicious agents  steer the network of agents away from the optimal solution by sending false information to their neighbors and consume significant bandwidth in the communication process. We propose a distributed gradient-based optimization algorithm in which the non-malicious agents exchange quantized information with one another. We prove convergence of the solution to a neighborhood of the optimal solution, 
and characterize the solutions obtained under the communication-constrained environment and presence of malicious agents. Numerical simulations to illustrate the results are also presented.
 \end{abstract}

\section{Introduction}
\label{sec:introduction}
The problem of making decisions and optimizing certain objectives over a network of spatially distributed agents is one that finds application in several areas, including unmanned autonomous systems, smart grid, and increasingly distributed learning \cite{zeng2017energy,arjevani2015communication}. In such systems, each agent plays a part in solving the optimization problem by carrying out computations using available local information in coordination with neighboring agents. Certain network objectives are formulated as optimization problems of the form:
\begin{equation}\label{eqn:originall}
\min_{\bm x\in \mathcal{X}} f(\bm x) = \sum_{i=1}^n f_i(\bm x),
\end{equation}
where $n$ is the number of agents in a network, $f_i(\cdot)$ is the local objective function of agent $i$, $\bm x_i$ is the decision variable corresponding to each agent and $f(\bm x)$ is the global objective function that will be optimized. Each agent $i$ has to perform an optimization of its local objective function $f_i(\bm x_i)$ and then iteratively communicates its decision variable $x_i$ with neighbors in a network where the objective $f_i(\cdot)$ is only known by each agent $i$. 
%

Solving Problem \eqref{eqn:originall} calls for a distributed implementation in which agents carry out local computations and exchange information with neighboring agents. 
When the objective function is strongly convex, with Lipschitz continuous gradients, convergence to the optimal solution using gradient-based methods can be achieved at a linear rate \cite{magnusson2016practical,lei2016primal,nedic2009distributed}. 
It is known that noisy communication channels and adversarial nodes in the network can slow down the linear convergence rate \cite{9400255,8683442}. 
In spatially distributed systems with band-limited communication channels, where agents may resort to exchanging low-bit (quantized) information at each time-step, the resulting convergence rate is also impacted. 
Researchers have explored how quantization affects the performance of these distributed optimization algorithms \cite{4738860,1413472,7544448,9157925}.

In this paper, we assume that malicious nodes not only send bad information, but also hog the communication bandwidth. This results in the non-malicoius nodes needing to manage the limited bandwidth available by quantizing the iterates broadcast to neighboring nodes. 
To solve this, we propose a distributed subgradient with quantized and adversarial attack method (DISQAAM) where non-adversarial agents send quantized information and adversarial agents send perturbed estimates using an attack vector. Different researchers have explored how adversarial nodes and quantized information affect the performance of distributed algorithms. In \cite{8683442,8906070,7997777,ravi2019detection, li2021privacy}, the authors examined different detection mechanisms that identify malicious agents and agents that cannot be compromised by adversarial attack. In fact, the authors in \cite{ravi2019detection} even examined a detection framework for non-strongly convex functions. 

A similar problem was studied in 
 \cite{9226092,6897948,7798273}, where the convergence was obtained when information is compressed with fewer bits and in some cases to just one bit of information. Since quantization introduces errors, some authors are studying ways to compensate for the quantization errors introduced \cite{9311407}. 
 However, none of these methods simultaneously explore the constraints arising from quantization and adversarial attack and still guarantee convergence.

\subsection{Contributions}
We characterize convergence properties of a distributed subgradient algorithm in the presence of adversarial agents and limited bandwidth for communication. We show that the algorithm converges to a neighborhood of the optimal solution and that the closeness can be expressed in terms of the number of bits from the quantization and the size of the attack vector. 
    Our results show that if a step size is chosen with respect to the strong convexity and Lipschitz parameters and the subgradient bound is expressed in terms of a suitable step size, then non-adversarial agents are still able to approach the optimal solution despite the presence of adversarial agents. 
    Furthermore, the performance of the algorithm is expressed as a function of the adversarial attack vector and fineness of the quantization. 
The rest of the paper follows the following structure: In Section \ref{sec:problemformulation}, the optimization problem and attack model is presented. Section \ref{sec:Convanalysiscentralized-BB} presents the algorithm and convergence result is discussed in  Section \ref{main-result}.  Numerical experiments follow in Section \ref{sec:Numerical} to illustrate the theoretical results and the paper ends with concluding remarks in Section \ref{sec:conclude}.
%


\subsection{Notation}
We respectively denote the set of positive and negative reals as $\mathbb{R}_+$ and $\mathbb{R}_-$. We denote a vector or matrix transpose as $(\cdot)^T$, and the L$2$-norm of a vector by $||\cdot||$. We also denote the gradient of a function $f(\cdot)$ as $\nabla f(\cdot)$ and an $n$ dimensional vector of ones as $1_n$.

\section{Problem formulation }\label{sec:problemformulation}
Suppose we have an undirected graph $G = (\mathcal{V},\mathcal{E})$ comprising $n$ nodes where $\mathcal{V} = {1,2, ... n}$ is the set of nodes (agents) and   $\mathcal{E}={(i,j)}$ is the set of edges. Let the neighbors of each agent $i$ be denoted by the set $N_i = \{j: \ (i,j) \in \mathcal{E}\}$. 
%
The agents are to jointly solve the problem
\begin{equation}\label{eqn:quadratic}
\min_{\bm x\in \mathcal{X}} f(\bm x) = \sum_{i=1}^n f_i(\bm x),
\end{equation}
where each agent $i$ has a component $f_i(\cdot)$ of the objective function. We assume that some agents in the network act in an adversarial (malicious) manner by perturbing their estimate at each iteration and overwhelming the communication bandwidth for coordination in the network. To manage the limited bandwidth left, the non-adversarial nodes quantize the information shared with neighboring nodes. 
The uniform quantizer is chosen to ensure that agents use equal and constant step sizes to broadcast information to their neighbors. 
We assume the local objective function $f_i(\cdot)$ in equation \eqref{eqn:quadratic} is strongly convex and $\mathcal{X}$ is the feasible set. 
%
\def\nodesize{0.5}
\def\thinpipe{1.5pt}
\def\largepipe{3pt}
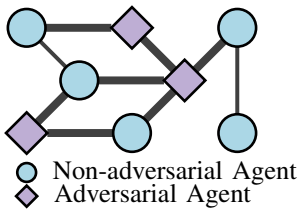
\begin{figure}[h]
\begin{center}
\begin{tikzpicture}[scale=0.7]
    \begin{scope}[shift={(0,2)},rotate=-90]
            \Vertex[size=\nodesize,position=270,distance=1mm]{A}
        \Vertex[x=1,y=1,size=\nodesize]{B}
        \Vertex[x=2,size=1.1*\nodesize,RGB,color={190,174,212},shape=diamond]{C}
        \Vertex[y=2,size=1.1*\nodesize,RGB,color={190,174,212},shape=diamond]{D}
        \Vertex[x=2,y=2,size=\nodesize]{E}
        \Vertex[x=1,y=3,size=1.1*\nodesize,RGB,color={190,174,212},shape = diamond]{F}
        \Vertex[y=4,size=\nodesize]{G}
        \Vertex[x=2,y=4,size=\nodesize]{H}
        
        \Edge[lw=\thinpipe](A)(B)
        \Edge[lw=\thinpipe](G)(H)
        \Edge[lw=\largepipe](A)(D)
        \Edge[lw=\largepipe](B)(C)
        \Edge[lw=\largepipe](B)(F)
        \Edge[lw=\largepipe](C)(E)
        \Edge[lw=\largepipe](D)(F)
        \Edge[lw=\largepipe](E)(F)
        \Edge[lw=\largepipe](F)(G)
    \end{scope}
    
    \Vertex[x=0,y=-0.75,size=0.5*\nodesize,label={\normalsize Non-adversarial Agent},position=0,distance=1mm]{Z}
    \Vertex[x=0,y=-1.2,size=0.5*\nodesize,RGB,color={190,174,212},shape = diamond,label={\normalsize Adversarial Agent},position=0,distance=1mm]{X}
\end{tikzpicture}
\end{center}
 \caption{Adversarial behavior in WSN }
\label{fig:adversarial-bandwidth}
\end{figure}

As illustrated in Figure \ref{fig:adversarial-bandwidth}, the thin communication links are used to denote connection between two non-adversarial agents, while thick pipes are used to depict the connection between an adversarial agent and any other agents (adversarial or non-adversarial). The non-adversarial nodes need to manage the communication bandwidth to be able to approach the optimal solution to equation \eqref{eqn:quadratic} to overcome the bottleneck caused by adversarial nodes upscaling their estimates.
We solve problem \eqref{eqn:quadratic} using the distributed subgradient method. In this framework, each non-adversarial agent $i$ updates a local copy $\bm x_i\in \mathbb{R}^p$ of the decision variable $\bm x\in \mathbb{R}^p$ to send quantized iterate $Q(\bm x_i(k))\in \mathbb{R}^p$ and carry out a local update according to:
\begin{equation}\label{eqn:gradient-quantized}
    \bm x_i(k+1) = \bm x_i(k) - \bm q_i(k)+\sum_{j\in N_i\cup\{i\}} w_{ij}\bm q_j(k)-\alpha_i \nabla f_i(\bm x(k)),
\end{equation}
where $\alpha_{i} \in \mathbb{R}_+$ is the step size, $w$ is the weight matrix, and $q_i(k)=Q^{i}_k(\bm x_i(k))$ is the quantized value of $\bm x_i(k)$. 
\subsection{The Uniform Quantizer}
  Let $\bm{ x} \in \R^d$.
A uniform quantizer with step size $\Delta$ and mid-value $\bm{x^\prime}$ is
$ \bm Q(x) = \bm{x^\prime} + \sign(\bm x-\bm{x^\prime}) .\Delta . \lfloor \frac{\|\bm x - \bm{x^\prime} \|}{\Delta} + \frac{1}{2} \rfloor$,
where $\Delta = \frac{\ell}{2^b}$, $\ell$ is the size of the quantization interval and $b$ is the number of bits. $\lfloor \cdot \rfloor$ is the floor function and $\sign(\bm x)$ is the sign function.
Let $b$ the number of bits, the quantization interval be set to $[\bm{x^\prime}-1/2, \bm{x^\prime}+1/2]$ the uniform quantizer be denoted as $\bm Q_k^i(\bm x_i(k))$ and mid-value $\bm x^{\prime^k}_{\Delta,i}$.
Let $\bm{q}_i(k) = \bm Q_k^i(\bm x_i(k))$; then, the quantization error, $\Delta_i(k) = \bm q_i(k) - \bm x_i(k)$. Suppose $\ell_i$ be the quantization interval size for each agent $i$, the quantization error bound of a uniform quantizer is given by 
    $\|\Delta_i(k)\| \le \frac{ \bm \ell_i}{2^{b+1}}$.
\subsection{Attack Model}
The aim of the malicious agents is to prevent the network from reaching the optimal solution to Problem \eqref{eqn:quadratic}, by perturbing their estimates with either a positive or negative attack vector $e_i(k) \in \mathbb{R}^p$ (with all entries of the vector being positive or negative) according to the following iteration:
\begin{align}\label{eqn:gradientmalicious}
  \bm x_i(k+1) &= \bm x_i(k) - \bm q_i(k) \\ 
  \quad &+\sum_{j\in N_i\cup\{i\}} w_{ij}\bm q_j(k)-\alpha_i \nabla f_i(\bm x(k)) {+} e_i(k). \nonumber
\end{align}
After the step in equation \eqref{eqn:gradientmalicious}, the adversarial agents broadcast their estimates to their neighbors.
We note that the adversarial agents can choose either the same attack vector $e(k)$ obtained by taking an average of estimates in a complete graphical structure or different attack vector $e_i(k)$ at every iteration in a general graphical structures (where each agent $i$ chooses an attack vector). We analyze the general graphical structures scenario and details regarding these graphical structures in an adversarial case are seen in \cite{9400255}.
The following are assumed on Problem \eqref{eqn:quadratic}:

\begin{assumption}\label{Assumption1}
The cost function $f(\bm x)$ in Problems \eqref{eqn:quadratic} is strongly convex.
This implies that for any vectors $x, y \in \mathbb{R}^{p}$, there exists a strong convexity parameter $\mu\in\mathbb{R}_+$, with $\mu \leq L$ (where $L$ is the Lipschitz constant) such that:
\begin{equation*}
    f(\bm x)\geq f(\bm y)+\nabla f(\bm y)^{T}(\bm x-\bm y)+\frac{\mu}{2}\|\bm x-\bm y\|^2.
\end{equation*}
\end{assumption}
\begin{assumption}\label{Assumption5}
The subgradient $g_i$ of $f_i$ at $\bm x_i$ is uniformly bounded by $\bar L_i$ in the feasible set $\mathcal{X}$. This implies that there exists $\bar L_i>0$ such that $\| g_i(\bm x_i)\|\leq \bar L_i$,
where for all $y$, the relationship $f_i(y)\geq f_i(x)+g_i^{T}(y_i-x_i)$ holds.
\end{assumption}
Using the above assumptions, we show below that convergence is attained despite the two constraints of malicious attack and quantization described.  
\section{Distributed Subgradient Convergence Analysis with Quantization and Attack}\label{sec:Convanalysiscentralized-BB}
We now proceed to the convergence analysis of the update Equation \eqref{eqn:gradientmalicious} used to solve Problem \eqref{eqn:quadratic}. The goal is to analyze convergence to the optimal solution of the minimization problem in equation \eqref{eqn:quadratic} in the presence of quantization and attack as described in section \ref{sec:problemformulation}. 
In equation \eqref{eqn:gradient-quantized}, each non-adversarial and adversarial agent $i$ achieves consensus with other nodes in the network by taking an average of his estimates and those of his neighbors. This averaged consensus includes non-adversarial and adversarial agents' estimates.
%
Let  $X=[\bm x_1; \ \bm x_2; \ \hdots \ \bm x_n]^T \in \mathbb{R}^{np}$
be the concatenation of the local variables $x_i$, $I_p$ be the identity matrix with dimension $p$ and let $\bm\Xi=[\bm \xi_1; \ \bm \xi_2; \ \hdots \ \bm \xi_n]^T \in \mathbb{R}^{np}$ be the concatenation  of $\bm \xi$. $H$ denotes the concatenation of the local variables of $h_i$.
When quantization occurs among agents during broadcasting of information, the quantized values can have solutions that are not feasible when subjected to constraints. In this regard, we account for the error due to projection unto the feasible set. Let $\bm{h} \in \R^d$, and $\bm{\xi}(\bm{h})$ be the error based on projection of $\bm{h}$ in the feasible set $\cal{X}$. By definition based on the description above,:
$  \bm{\xi}(\bm{h}) = \bm{h} - [\bm{h}]_{\cal{X}}$.
Another representation of Equation~\eqref{eqn:gradient-quantized} is given by:
\begin{align}
    \bm{h}_i(k) = & \sum_{j\in\cal{N}_{i}} w_{ij} \bm x_j(k) + \bm x_i(k) - \bm q_i(k) \nonumber\\
         & + \sum_{j\in \cal{N}_i} w_{ij}(\bm q_j(k)-\bm x_j(k)) - \alpha(k) \bm g_i(x_i(k)).
\end{align}
The iterative update equation is now given as:
\begin{align}
    \bm x_i(k+1) = [\bm{h}_i(k)]_{\cal{X}} = \bm{h}_i(k) - \bm{\xi}_i(\bm{h}_i(k)).
\end{align}
We obtain the matrix form of the above update as
\begin{align}
    \bm{H}(k) = \bm{W} \bm{X}(k) {+} (\bm{I}{-}\bm{W})(\bm{X}(k){-}\bm{Q}(k)) {-} \alpha(k) \bm{G}(\bm{X}(k)),
\end{align}
\begin{align}
    \bm{X}(k+1) = \bm{H}(k) - \bm{\Xi}(\bm{H}(k)),
\end{align}
and $\bm{W}$ is the doubly stochastic matrix.
Suppose $\bm{\bar{x}}(k)$ and $\bm{\bar{\xi}}(k)$ are the mean of $\bm x_i(k)$ and $\bm \xi_i(h_i(k))$ respectively, we obtain
$ \bm{\bar{x}}(k) = \frac{1}{n} \sum_{i=1}^n \bm x_i(k) = \frac{1}{n} \bm{X}^T \bm{1} \in \R^d$, and
    \begin{equation}\label{projection-error-formula}
    \bm{\bar{\xi}}(k) = \frac{1}{n} \sum_{i=1}^n \bm \xi_i(h_i(k)) = \frac{1}{n}\left(\bm\Xi(k)\right)^T \bm{1} \in \R^d.
\end{equation}
We can define the quantization error for each agent i as $\Delta_i(k)=x_i(k)-q_i(k)$ and the average of the errors as $\Delta(k) = \displaystyle\frac{1}{n} \sum_{i=1}^n \Delta_i(k)$.
We now have obtain:
$\bm{\bar h}(k) = \bm{\bar x}(k) - \frac{\alpha(k)}{n} \sum_{i=1}^n \bm g_i(\bm x_i(k))$,
and $\bm{\bar x}(k+1) = \bm{\bar h}(k) - \bm{\bar \xi}(k)$. Thus, we obtain the iterative equation:
\begin{align} \label{eq:bar_x}
    \bm{\bar x}(k+1) =  \bm{\bar x}(k) - \frac{\alpha(k)}{n} \sum_{i=1}^n \bm g_i(\bm x_i(k)) - \bm{\bar \xi}(k).
\end{align}
Now we introduce a Lemma that accounts for the bounds of the projection error according to equation \eqref{projection-error-formula}.
\begin{lemma}\label{error-projection}

Let Assumptions \ref{Assumption1} and \ref{Assumption5} hold with $\Delta(k)$ being the average of the quantization errors. If the step size $\alpha$ for each agent $i$ is constant such that $\alpha\leq 1$, the error due to projection is bounded given by the following relationship:
 \[
\|\bm{\bar\xi}(k)\| \le \sqrt{8} \Delta(k) + \sqrt{2} \frac{\bar L}{n}\alpha.
 \]
\end{lemma}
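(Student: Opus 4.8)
The plan is to bound $\|\bm{\bar\xi}(k)\|$ by first expressing the projection error on the mean in terms of the per-agent projection errors, and then bounding each per-agent projection error by the quantity being projected, exploiting that the projection onto the (convex) feasible set $\mathcal{X}$ is nonexpansive and that a fixed point of the iteration would lie in $\mathcal{X}$. Concretely, since $\bm{\bar\xi}(k) = \frac1n\sum_i \bm\xi_i(\bm h_i(k))$ by \eqref{projection-error-formula}, the triangle inequality gives $\|\bm{\bar\xi}(k)\| \le \frac1n\sum_i \|\bm\xi_i(\bm h_i(k))\|$. So it suffices to bound each $\|\bm\xi_i(\bm h_i(k))\| = \|\bm h_i(k) - [\bm h_i(k)]_{\mathcal X}\|$. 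The key observation is that $\bm\xi_i(\bm h_i(k))$ measures how far $\bm h_i(k)$ sticks out of $\mathcal X$; since $\bm x_i(k), \bm x_j(k) \in \mathcal X$ and $\sum_{j} w_{ij} = 1$, a convex combination of the $\bm x_j(k)$ is already in $\mathcal X$, so the only ``excess'' comes from the quantization-correction terms $\bm x_i(k)-\bm q_i(k)$ and $\sum_{j\in\mathcal N_i} w_{ij}(\bm q_j(k)-\bm x_j(k))$ and from the subgradient step $-\alpha(k)\bm g_i(\bm x_i(k))$.

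The main steps I would carry out are: (i) write $\bm h_i(k) = \bm z_i(k) + \bm r_i(k)$ where $\bm z_i(k) := \sum_{j\in\mathcal N_i\cup\{i\}} w_{ij}\bm x_j(k) \in \mathcal X$ (convexity of $\mathcal X$, row-stochasticity of $\bm W$) and $\bm r_i(k)$ collects the remaining terms; (ii) use nonexpansiveness of $[\cdot]_{\mathcal X}$ together with $[\bm z_i(k)]_{\mathcal X} = \bm z_i(k)$ to get $\|\bm h_i(k) - [\bm h_i(k)]_{\mathcal X}\| \le \|\bm h_i(k) - \bm z_i(k)\| + \|\bm z_i(k) - [\bm h_i(k)]_{\mathcal X}\| \le 2\|\bm r_i(k)\|$, using $\|\bm z_i - [\bm h_i]_{\mathcal X}\| = \|[\bm z_i]_{\mathcal X} - [\bm h_i]_{\mathcal X}\| \le \|\bm z_i - \bm h_i\| = \|\bm r_i\|$; (iii) bound $\|\bm r_i(k)\| \le \|\bm x_i(k)-\bm q_i(k)\| + \sum_{j\in\mathcal N_i} w_{ij}\|\bm q_j(k)-\bm x_j(k)\| + \alpha(k)\|\bm g_i(\bm x_i(k))\| \le \|\Delta_i(k)\| + \sum_{j} w_{ij}\|\Delta_j(k)\| + \alpha \bar L_i$ via Assumption~\ref{Assumption5} and the definition $\Delta_i(k) = \bm x_i(k)-\bm q_i(k)$; (iv) average over $i$, use $\frac1n\sum_i \|\Delta_i(k)\| \le \sqrt{2}\,\Delta(k)$-type bounds (relating the average norm to the norm of the average $\Delta(k)$, or treating $\Delta(k)$ as a uniform bound on the errors) and $\frac1n\sum_i \bar L_i \le \bar L/n$-type consolidation, then collect the numerical constants so that the doubling from step (ii) and the two quantization contributions combine into the $\sqrt 8\,\Delta(k)$ term and the subgradient step into the $\sqrt 2\,(\bar L/n)\alpha$ term.

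The main obstacle I expect is step (iv): reconciling the bookkeeping between ``average of the norms'' $\frac1n\sum_i\|\Delta_i(k)\|$ and the object $\Delta(k) = \frac1n\sum_i \Delta_i(k)$ that appears in the statement — these are not equal in general, so the argument must either treat $\Delta(k)$ as shorthand for a common per-agent error bound (so that $\|\Delta_i(k)\| \le \Delta(k)$ for all $i$), or invoke $\|W\|$-type inequalities to absorb the discrepancy, and likewise for passing from $\bar L_i$ to $\bar L$. Pinning down exactly which convention makes the constants $\sqrt 8$ and $\sqrt 2$ come out, and confirming the role of the hypothesis $\alpha \le 1$ (which I suspect is used to discard a higher-order $\alpha^2$ cross term or to bound $\alpha$ by $\sqrt\alpha$ somewhere), is the delicate part; the geometric/nonexpansiveness steps (i)–(iii) are routine.
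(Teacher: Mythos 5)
Your route is genuinely different from the paper's. The paper does not argue about the geometry of the projection at all: it takes the per-agent squared bound $\sum_{i=1}^n\|\bm\xi_i(\bm h_i(k))\|^2\le 8\sum_{i=1}^n\|\Delta_i(k)\|^2+2\bar L^2\alpha^2(k)$ wholesale from the cited reference \cite{9157925}, divides by $n^2$, takes square roots term by term, and then massages $\bigl(\sum_i\|\Delta_i(k)\|^2\bigr)^{1/2}$ into $\sum_i\Delta_i(k)$ under the side condition $\ell\le 1$. Your argument instead derives the per-agent bound from first principles: decompose $\bm h_i(k)$ into a convex combination $\bm z_i(k)\in\mathcal X$ plus a residual $\bm r_i(k)$, and control $\|\bm\xi_i(\bm h_i(k))\|$ by $\|\bm r_i(k)\|$. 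That is more self-contained and more transparent about where the error actually comes from; note also that your step (ii) can be sharpened, since $\|\bm h_i-[\bm h_i]_{\mathcal X}\|=\min_{y\in\mathcal X}\|\bm h_i-y\|\le\|\bm h_i-\bm z_i\|=\|\bm r_i\|$ directly, so the factor $2$ from the triangle inequality through $[\bm h_i]_{\mathcal X}$ is unnecessary.

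The caveat is the one you yourself flag in step (iv), and it is a real gap in the sense that your plan, executed straightforwardly, lands on a bound of the form $\frac{2}{n}\sum_i\|\Delta_i(k)\|+\frac{\alpha}{n}\sum_i\bar L_i$ rather than on the literal constants $\sqrt8$ and $\sqrt2\,\bar L\alpha/n$ of the statement; there is no clean way to convert $\frac1n\sum_i\|\Delta_i(k)\|$ into the stated $\Delta(k)=\frac1n\sum_i\Delta_i(k)$, which is a vector average, not an average of norms. You should be aware that the paper's own proof suffers from the same type mismatch (its inequality $\sqrt8\bigl(\sum_i\|\Delta_i(k)\|^2\bigr)^{1/2}\le\sqrt8\sum_i\Delta_i(k)$ compares a scalar to a vector and only makes sense if $\Delta_i(k)$ is silently reinterpreted as $\|\Delta_i(k)\|$), and its passage from $\|\frac1n\sum_i\bm\xi_i\|^2$ to $\frac{1}{n^2}\sum_i\|\bm\xi_i\|^2$ drops cross terms without justification. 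So your approach is viable and arguably cleaner, but to match the lemma as stated you would either need to adopt the convention that $\Delta(k)$ denotes a uniform scalar bound on the $\|\Delta_i(k)\|$, or simply accept different (and in fact better-motivated) constants; the hypothesis $\alpha\le1$ plays no essential role in either argument.
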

\begin{proof}
See Appendix \ref{error-projection-proof}.
\end{proof}
\section{Main Result}\label{main-result}
This section explores the convergence analysis according to the proposition in sections \ref{sec:problemformulation} and \ref{sec:Convanalysiscentralized-BB}. This is shown in the following theorem.
\begin{theorem}
\label{distributed-quantized-malicious}
 Let Assumptions \ref{Assumption1} and \ref{Assumption5} hold, 
 and the step size $\alpha$ satisfies
 $\alpha \leq \frac{2}{\mu + L} \quad \text{and} \quad \frac{\mu + L}{3\mu L} \leq \alpha \leq \frac{\mu + L}{2\mu L}$.
 Given that the size of a uniform quantization interval with $b$ bits be upper-bounded by
 $\bm \ell \le  \frac{2^b}{ \sqrt{6}}$,
and the subgradient bound be upper-bounded by $\bar L \le 1/\sqrt{6}\alpha$,
then the iterates generated when non-adversarial send quantized estimates converge to a neighborhood of the optimal solution, $x^*$ with the neighborhood size given by  
$\frac{\sqrt{6}(l+2^b \bar L\alpha)+2^b \sqrt{3}\|\bm e(k)\|}{2^b}$.

\end{theorem}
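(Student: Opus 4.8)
The plan is to combine the averaged iterate recursion \eqref{eq:bar_x} with the strong-convexity/Lipschitz properties of $f$ to obtain a contraction in the distance $\|\bm{\bar x}(k+1)-\bm x^*\|$, and then bound the residual (non-contracting) terms using Lemma \ref{error-projection} together with the stated ceilings on $\bm\ell$, $\bar L$, and the attack vector $\bm e(k)$. Concretely, I would first start from the adversarial update \eqref{eqn:gradientmalicious}, pass to its matrix form, and average to get an equation of the shape
\[
\bm{\bar x}(k+1) = \bm{\bar x}(k) - \frac{\alpha}{n}\sum_{i=1}^n \bm g_i(\bm x_i(k)) - \bm{\bar\xi}(k) + \bm{\bar e}(k),
\]
where $\bm{\bar e}(k)$ is the average of the injected attack vectors. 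Subtract $\bm x^*$ from both sides, take norms, and apply the triangle inequality to separate the ``descent'' part $\|\bm{\bar x}(k)-\frac{\alpha}{n}\sum_i \bm g_i(\bm x_i(k))-\bm x^*\|$ from the perturbation part $\|\bm{\bar\xi}(k)\| + \|\bm{\bar e}(k)\|$.

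For the descent part, the standard route is to treat $\frac1n\sum_i \bm g_i(\bm x_i(k))$ as a gradient evaluated at (or near) $\bm{\bar x}(k)$, and use the co-coercivity-type inequality that holds for $\mu$-strongly convex, $L$-smooth $f$: for $\alpha\le \tfrac{2}{\mu+L}$ one gets $\|\bm{\bar x}(k)-\alpha\nabla f(\bm{\bar x}(k))-\bm x^*\|\le (1-\alpha\mu)\|\bm{\bar x}(k)-\bm x^*\|$ (or the sharper $\max\{|1-\alpha\mu|,|1-\alpha L|\}$ bound). The second step-size window $\tfrac{\mu+L}{3\mu L}\le\alpha\le\tfrac{\mu+L}{2\mu L}$ is what pins the contraction factor to a clean constant (it forces $1-\alpha\mu$ into an interval bounded away from $1$, and in fact the specific numbers $\sqrt{3},\sqrt{6}$ in the final bound suggest the contraction factor is being taken to be of order $1/\sqrt{3}$ or similar after these substitutions). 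I would then invoke Assumption \ref{Assumption5} to replace the disagreement-induced error $\|\bm g_i(\bm x_i(k))-\bm g_i(\bm{\bar x}(k))\|$ by a multiple of $\bar L$, and fold that into the perturbation budget.

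Next, substitute Lemma \ref{error-projection}, $\|\bm{\bar\xi}(k)\|\le \sqrt{8}\,\Delta(k)+\sqrt2\,\tfrac{\bar L}{n}\alpha$, and the uniform-quantizer error bound $\|\Delta_i(k)\|\le \bm\ell/2^{b+1}$ (so $\Delta(k)\le \bm\ell/2^{b+1}$), then impose the hypotheses $\bm\ell\le 2^b/\sqrt6$ and $\bar L\le 1/(\sqrt6\,\alpha)$ to turn the accumulated constants into the expression $\tfrac{\sqrt6(\ell+2^b\bar L\alpha)+2^b\sqrt3\|\bm e(k)\|}{2^b}$. Finally, I would unroll the one-step recursion $\|\bm{\bar x}(k+1)-\bm x^*\|\le \rho\|\bm{\bar x}(k)-\bm x^*\|+ c(k)$ with $\rho<1$; since $\sum_{j}\rho^{\,j}=\tfrac{1}{1-\rho}$ is a finite constant absorbed into the neighborhood radius, the iterates land in the claimed ball (taking the attack vectors uniformly bounded, or stating the bound in terms of $\sup_k\|\bm e(k)\|$).

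The main obstacle I anticipate is controlling the \emph{consensus error} — the gap between the averaged iterate $\bm{\bar x}(k)$ and the individual $\bm x_i(k)$ that appear inside $\sum_i\bm g_i(\bm x_i(k))$. In a genuinely distributed setting this requires a separate recursion bounding $\|\bm X(k)-\bm 1\otimes\bm{\bar x}(k)\|$ via the spectral gap of $\bm W$, and the quantization terms $(\bm I-\bm W)(\bm X(k)-\bm Q(k))$ feed into it; making that argument rigorous while keeping the final constants as clean as stated (and simultaneously handling the adversarial agents' contribution to the average, which need \emph{not} vanish) is the delicate part. If the intended argument instead leans on the complete/regular graph structure cited from \cite{9400255} — where $\bm{\bar x}(k+1)=\bm{\bar x}(k)-\alpha\bar g(k)$ exactly — then this obstacle is sidestepped and the proof reduces to the contraction-plus-perturbation bookkeeping above; I would flag which of the two regimes is being used at the outset.
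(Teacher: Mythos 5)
Your proposal is essentially correct in outline but takes a genuinely different route from the paper. The paper does not use the triangle-inequality-plus-standard-contraction argument you describe: it squares the distance to the shifted point $\bm x^*+\bm e(k)$, expands $\|\bm{\bar x}(k+1)-\bm x^*-\bm e(k)\|^2$ into eight terms, bounds each cross term with Young-type inequalities, and invokes the strong-convexity/Lipschitz coercivity bound $-2(\bm{\bar x}(k)-\bm x^*)^T\left(\tfrac{\alpha}{n}\sum_i\bm g_i\right)\le -\tfrac{\alpha}{n}c_1\|\sum_i\bm g_i\|^2-\alpha c_2\|\bm{\bar x}(k)-\bm x^*\|^2$ with $c_1=\tfrac{2}{\mu+L}$, $c_2=\tfrac{2\mu L}{\mu+L}$. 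Because three copies of $\|\bm{\bar x}(k)-\bm x^*\|^2$, $\|\bm e(k)\|^2$ and $\|\bm{\bar\xi}(k)\|^2$ accumulate, the contraction coefficient becomes $(3-3\alpha c_2)$ — this is precisely where the second step-size window $\tfrac{\mu+L}{3\mu L}\le\alpha\le\tfrac{\mu+L}{2\mu L}$ and the constants $\sqrt{3}$, $\sqrt{6}$ in the stated neighborhood come from. Your route, using $\|\bm{\bar x}(k)-\alpha\nabla f(\bm{\bar x}(k))-\bm x^*\|\le\rho\|\bm{\bar x}(k)-\bm x^*\|$ and then triangle inequality for the perturbations, is cleaner and would prove the same qualitative statement, but it would not reproduce the specific radius $\tfrac{\sqrt{6}(\ell+2^b\bar L\alpha)+2^b\sqrt{3}\|\bm e(k)\|}{2^b}$ claimed in the theorem; to match that expression you would need to mimic the squared-norm expansion. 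Also note the paper handles the attack term not by averaging $\bm e(k)$ into the recursion, as you do, but by arguing at the end that $\|\bm{\bar x}(k+1)-\bm x^*-\bm e(k)\|\ge\|\bm{\bar x}(k+1)-\bm x^*\|$ under sign conditions on $\bm e(k)$.

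Two of the obstacles you flag are real, and in fact the paper's own proof does not resolve them. First, the consensus error: the paper applies the coercivity inequality to $\tfrac1n\sum_i\bm g_i(\bm x_i(k))$ as though it were $\nabla f(\bm{\bar x}(k))$, with no separate recursion bounding $\|\bm X(k)-\bm 1\otimes\bm{\bar x}(k)\|$; your instinct that this needs a spectral-gap argument (or a complete-graph assumption) is sound. Second, the unrolling: your geometric-sum accumulation $\sum_j\rho^j=\tfrac{1}{1-\rho}$ is the correct way to pass from the one-step bound to the limit, whereas the paper simply carries the constant terms forward once when writing the $k$-step bound. So while your sketch diverges from the paper's argument and would need the squared-norm bookkeeping to land on the exact stated constants, it is, where it differs, the more careful of the two.
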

\begin{proof}
Since $\bm x^*$ is the optimal solution of \eqref{eq:bar_x} and $\bm x^a$ is the adversary according to $\bm x^a = \bm x^* + \bm e(k)$, we obtain:
\begin{align*}
    \| &\bm{\bar x}(k+1) - \bm x^* - \bm e(k) \|^2 \\
        &= \| \bm{\bar x}(k) - \bm x^* - \bm e(k)  - \frac{\alpha(k)}{n} \sum_{i=1}^n \bm g_i(\bm x_i(k)) -\bm{\bar\xi}(k) \|^2.
\end{align*}
By expansion, we obtain the following:
\begin{align*}
    \| &\bm{\bar x}(k+1) {-} \bm x^* {-} \bm e(k) \|^2 \\
        &= \| \bm{\bar x}(k+1) {-} \bm x^* \|^2 {+} \|\bm e(k)\|^2 {+} \left\|\frac{\alpha(k)}{n} \sum_{i=1}^n \bm g_i(\bm x_i(k)) \right\|^2\\
        &\quad {+} \|\bm{\bar\xi}(k)\|^2 {+} 2 \bm e(k) \left( \frac{\alpha(k)}{n} \sum_{i=1}^n \bm g_i(\bm x_i(k))  {-} (\bm{\bar x}(k) {-} \bm x^*) \right)\\
        &\quad {-} 2 (\bm{\bar x}(k) {-} \bm x^*)^T \left( \frac{\alpha(k)}{n} \sum_{i=1}^n \bm g_i(\bm x_i(k)) \right)\\
        &\quad {+} 2 \bm{\bar\xi}(k) \left( \frac{\alpha(k)}{n} \sum_{i=1}^n \bm g_i(\bm x_i(k)) {-} (\bm{\bar x}(k) - \bm x^*) \right)
       {+} 2 \bm e(k) \bm{\bar\xi(k)}.
\end{align*}
By inspection, the upper bound of the preceding expression $\| \bm{\bar x}(k+1) - \bm x^* - \bm e(k) \|^2$ has eight terms. In what follows, we bound some of the eight terms, starting with the fifth term. 

\begin{align*}
    & 2 \bm e(k) \left( \frac{\alpha(k)}{n} \sum_{i=1}^n \bm g_i(\bm x_i(k))  - (\bm{\bar x}(k) - \bm x^*) \right)\\
    & \quad \le  \|\bm e(k)\|^2  + \left\|\frac{\alpha(k)}{n} \sum_{i=1}^n \bm g_i(\bm x_i(k)) - (\bm{\bar x}(k) - \bm x^*)\right\|^2\\
        & \qquad - 2(\bm{\bar x}(k) - \bm x^*)^T \left( \frac{\alpha(k)}{n} \sum_{i=1}^n \bm g_i(\bm x_i(k)) \right),\\
    & \quad \le \|\bm e(k)\|^2 + \frac{\alpha^2}{n^2} \left\|\sum_{i=1}^n \bm g_i(\bm x_i(k))\right\|^2 + \| \bm{\bar x}(k) - \bm x^* \|^2\\
        & \qquad - \frac{\alpha}{n} c_1 \left\|\sum_{i=1}^n \bm g_i(\bm x_i(k))\right\|^2 - \alpha c_2 \| \bm{\bar x}(k) - \bm x^* \|^2,
\end{align*}
where $c_1 = \dfrac{2}{\mu +L}$ and $c_2 = \dfrac{2 \mu L}{\mu +L}$.
We proceed by bounding the second term of the derived upper bound of $2 \bm e(k) \left( \frac{\alpha(k)}{n} \sum_{i=1}^n \bm g_i(\bm x_i(k))  - (\bm{\bar x}(k) - \bm x^*) \right)$. In this regard, we have the following bound:
\begin{equation}\label{sum-gradient-bound}
    \left\|\sum_{i=1}^n \bm g_i(\bm x_i(k))\right\| \le \sum_{i=1}^n\| g_i(\bm x_i(k))\|.
\end{equation}
By squaring both sides of the equation \eqref{sum-gradient-bound}, we obtain:
$ \|\sum_{i=1}^n \bm g_i(\bm x_i(k))\|^2 \le (\sum_{i=1}^n\|g_i(\bm x_i(k))\|)^2$. Therefore we have the relationship:
\begin{align*}
&2 \bm e(k) \left( \frac{\alpha(k)}{n} \sum_{i=1}^n \bm g_i(\bm x_i(k)) {-} (\bm{\bar x}(k) {-} \bm x^*) \right)\\
& \qquad \le \|\bm e(k)\|^2 {+} \frac{\alpha^2}{n^2} \left(\sum_{i=1}^n \bm \|g_i(\bm x_i(k))\|\right)^2 {+} \| \bm{\bar x}(k) - \bm x^* \|^2\\
& \qquad~ {-} \frac{\alpha}{n} c_1 \left(\sum_{i=1}^n \bm \|g_i(\bm x_i(k))\|\right)^2 {-} \alpha c_2 \| \bm{\bar x}(k) {-} \bm x^* \|^2.
\end{align*}
We recall that the expression $\| \bm{\bar x}(k+1) - \bm x^* - \bm e(k) \|^2$ is upper-bounded by eight terms. Now we bound the seventh term, $2 \bm{\bar\xi}(k) ( \frac{\alpha(k)}{n} \sum_{i=1}^n \bm g_i(\bm x_i(k)) - (\bm{\bar x}(k) - \bm x^*) )$ to obtain:
\begin{align*}
    & 2 \bm{\bar\xi}(k) \left( \frac{\alpha(k)}{n} \sum_{i=1}^n \bm g_i(\bm x_i(k)) - (\bm{\bar x}(k) - \bm x^*) \right)\\
        & \qquad \le \|\bm{\bar\xi}(k)\|^2 + \frac{\alpha^2}{n^2} \left(\sum_{i=1}^n \bm \|g_i(\bm x_i(k))\|\right)^2 + \| \bm{\bar x}(k) - \bm x^* \|^2\\
        & \qquad~~ - \frac{\alpha}{n} c_1 \left(\sum_{i=1}^n \bm \|g_i(\bm x_i(k))\|\right)^2 - \alpha c_2 \| \bm{\bar x}(k) - \bm x^* \|^2,
\end{align*}\\
By bounding the expression as $2 \bm e \bm{\bar\xi}(k) \le \|\bm e(k)\|^2 + \|\bm{\bar\xi}(k)\|^2$, we obtain the combination of all bounds obtained as:
\begin{align*}
    \|&\bm{\bar x}(k+1) - \bm x^* -\bm e(k)\|^2 \\
        & \le \|\bm{\bar x}(k) {-} \bm x^*\|^2 {+} \|\bm e(k)\|^2 {+} \frac{\alpha^2}{n^2} (\sum_{i=1}^n \bm \|g_i(\bm x_i(k))\|)^2 {+}\|\bm{\bar\xi}(k)\|^2\\
            & \quad {+} \|\bm e(k)\|^2 {+} \frac{\alpha^2}{n^2} \left(\sum_{i=1}^n \bm \|g_i(\bm x_i(k))\|\right)^2 {+} \|\bm{\bar x}(k) {-} \bm x^*\|^2\\
            & \quad - \frac{\alpha}{n} c_1 \left(\sum_{i=1}^n \bm \|g_i(\bm x_i(k))\|\right)^2 - \alpha c_2 \| \bm{\bar x}(k) - \bm x^* \|^2\\
            & \quad - \frac{\alpha}{n} c_1 \left(\sum_{i=1}^n \bm \|g_i(\bm x_i(k))\|\right)^2 - \alpha c_2 \| \bm{\bar x}(k) - \bm x^* \|^2 \\
            & \quad + \|\bm{\bar\xi}(k)\|^2 + \frac{\alpha^2}{n^2} \left(\sum_{i=1}^n \bm \|g_i(\bm x_i(k))\|\right)^2 + \|\bm{\bar x}(k) - \bm x^*\|^2\\
            & \quad - \frac{\alpha}{n} c_1 \left(\sum_{i=1}^n \bm \|g_i(\bm x_i(k))\|\right)^2 - \alpha c_2 \| \bm{\bar x}(k) - \bm x^* \|^2\\
            & \quad + \|\bm e_i(k)\|^2 + \|\bm{\bar\xi}(k)\|^2,\\
        & = (3 - 3 \alpha c_2) \| \bm{\bar x}(k) - \bm x^* \|^2 + 3 \|\bm e(k)\|^2 + 3 \|\bm{\bar\xi(k)}\|^2\\
            & \quad + \left( \frac{3\alpha^2}{n^2} - \frac{3\alpha}{n} c_1 \right) \left(\sum_{i=1}^n\|g_i(\bm x_i(k))\|\right)^2.
\end{align*}
We will show that
$\left( \frac{3\alpha^2}{n^2} - \frac{3\alpha}{n} c_1 \right) \le 0$,
when $\alpha \le c_1$. To do this, it suffices to show that
$3\alpha^2 - 3\alpha n c_1 \le 0$.
We know that the root of the equation in $\alpha$ of ${\alpha(3\alpha - 3 n c_1) = 0}$ is $\alpha = 0$ and $\alpha = n c_1$, and we have solution $ 0 \le \alpha \le n c_1$. So $\alpha \in [0, n c_1]$.
If $\alpha \le c_1$ and $n \ge 1$, it implies that $\alpha \le n c_1$.
Alternatively, if $\alpha \le c_1$, then $\alpha^2 \le \alpha c_1$ and consequently $ \frac{3\alpha^2}{n^2} - \frac{3 \alpha n c_1}{n} \le 0$
for $n> 0$. We now obtain
$\left( \frac{3\alpha^2}{n^2} - \frac{3 \alpha n c_1}{n} \right) \left( \sum_{i=1}^n \|\bm g_i(\bm x_i(k))\| \right)^2 \le 0$.
Therefore the bounds on $\| \bm{\bar x}(k+1) - \bm x^* - \bm e_i(k) \|^2$ is:
 \begin{align*}
     \| \bm{\bar x}(k+1) - \bm x^* - \bm e(k) \|^2 & \le (3 - 3\alpha c_2)  \|\bm{\bar x}(k) - \bm x^*\|^2\\
        & \quad + 3\|\bm e(k)\|^2 + 3 \|\bm{\bar\xi(k)}\|^2.
\end{align*}
To show that $3 - 3 \alpha c_2 \ge 0$, we show that $3 \alpha c_2 \le 3 \Rightarrow \alpha c_2 \le 1$.
If $\alpha \le 1/\mu$, then we have the following:
$\alpha c_2 \le \frac{1}{\mu} \frac{2\mu L}{\mu+L} = \frac{2L}{\mu+L}$.
If $\mu = L$, then $\mu+L = 2L \Rightarrow \alpha c_1 \le 1$.
Since $\alpha c_2 \le 1$, then we have that $(3 - 3\alpha c_2) \ge 0$.
For $3-3\alpha c_2$ not to grow unbounded we need ${(3-3\alpha c_2)\in(0,1)}$. This implies that we need $3(1-\alpha c_2) \le 1$ or equivalently when $\alpha \ge \frac{2}{3 c_2} =\frac{\mu+L}{3\mu L}$.
If $\alpha \in \left(\frac{\mu+L}{3\mu L} , \frac{1}{\mu} \right)$, then the expression $(3 - 3\alpha c_2)$ will not grow unbounded.
From Lemma \ref{error-projection}, we obtain:
    $\|\bm{\bar\xi}(k)\|^2 \le 8 \|\Delta(k)\|^2 + 2 \bar L^2\alpha^2$.
and it leads to:
\begin{align*}
    &\|\bm{\bar x}(k+1) - \bm x^*-\bm e(k)\|^2 \\
    &\quad\le (3-3\alpha c_2) \|\bm{\bar x}(k) - \bm x^*\|^2 + 3\|\bm e(k)\|^2 + 24\|\Delta(k)\|^2 + 6\bar L^2\alpha^2.
\end{align*}
This leads to following relationship:
\begin{align*}
    &\|\bm{\bar x}(k+1) - \bm x^*-\bm e(k)\|^2 \\
    &\quad\le (3-3\alpha c_2) \|\bm{\bar x}(k) - \bm x^*\|^2 + 3\|\bm e(k)\|^2 + \frac{24(\bm \ell)^2}{2^{2b+2}} + 6\bar L^2\alpha^2,\\
    &\quad\le (3-3\alpha c_2) \|\bm{\bar x}(k) - \bm x^*\|^2 + 3\|\bm e(k)\|^2 + \frac{6(\bm \ell)^2}{2^{2b}} + 6\bar L^2\alpha^2.
\end{align*}
If $\bm{\bar x}(k+1) - \bm x^* < 0 < \bm e(k)$, then $\|\bm{\bar x}(k+1) {-} \bm x^*{-}\bm e(k)\| \ge \|\bm{\bar x}(k{+}1) - \bm x^*\|$. In addition, if $e(k)\leq 0$, then $\|\bm{\bar x}(k+1) - \bm x^*-\bm e(k)\| \ge \|\bm{\bar x}(k+1) - \bm x^*\|$ also holds. Therefore, the bound on $\|\bm{\bar x}(k+1) {-} \bm x^*{-}\bm e(k)\|^2$ is
\begin{align*}
    &\|\bm{\bar x}(k+1) {-} \bm x^*\|^2 \\
    &\quad\le (3{-}3\alpha c_2) \|\bm{\bar x}(k) {-} \bm x^*\|^2  {+} 3\|\bm e(k)\|^2 {+} \frac{6}{2^{2b}}(\bm \ell)^2 {+} 6\bar L^2\alpha^2.
\end{align*}
By applying recursion principles, we obtain the following:
\begin{align*}
    &\|\bm{\bar x}(k) - \bm x^*\|^2 \\
        &~\le (3{-}3\alpha c_2)^k \|\bm{\bar x}(0) {-} \bm x^*\|^2 {+} 3\|\bm e(k)\|^2 {+} \frac{6}{2^{2b}}(\bm \ell)^2 {+} 6\bar L^2\alpha^2,
\end{align*}
Equivalently, we the following relationship:
\begin{align}
    \|\bm{\bar x}(k) {-} \bm x^*\| & \le (3{-}3\alpha c_2)^{k/2} \|\bm{\bar x}(0) {-} \bm x^*\| {+} \sqrt{3}\|\bm e(k)\| \nonumber \\ 
    & \quad {+} \sqrt{\frac{6}{2^{2b}}}\bm \ell + \sqrt{6} \bar L\alpha \label{upper-bound}
\end{align}
For $\sqrt{6/2^{2b}}\bm \ell$ to be small, we need $\sqrt{6/2^{2b}}\bm \ell \leq 1$, which implies that $\bm \ell \le \frac{1}{\sqrt{\frac{6}{2^{2b}}}} = \frac{2^b}{ \sqrt{6}}$.
In addition, if $\sqrt{6}\bar L\alpha \le 1$ or $\bar L \le 1/\sqrt{6}\alpha$, the size of the neighborhood is given by:
$\frac{\sqrt{6}(l+2^b L\alpha)+2^b \sqrt{3}\|\bm e(k)\|}{2^b}$,
and the non-adversarial agents can converge to the neighborhood of the optimal solution, $\bm x^*$.
 
There are trade-offs in the results obtained by combining adversarial attack and quantization as constraints. This is evident in the proof of Theorem \ref{distributed-quantized-malicious}, where we needed the condition $\|\overline{x}(0)-x^*\| <\|e(k)\|$ when $e(k)>0$ for the algorithm to converge. However, such condition is not needed when $e(k)\leq 0$. In addition, due to strong convexity assumption and boundedness on the coefficient of the error term, the step size lies in the domain  $(\frac{\mu + L}{3\mu L}, \frac{\mu + L}{2\mu L})$. Moreover, the subgradient bound needs to depend on the step size as well to aid convergence. 
 
 In addition, our previous result \cite{9400255} show that increasing the number of adversarial agents leads to an increase in the convergence neighborhood. However, when quantization is added as a constraint, increasing the number of bits leads to reduction of the error bounds. While the authors in \cite{4738860,7544448,9157925} show that convergence of distributed subgradient mthods with qualtization depends on the quantization levels and the number of bits, our proposed method adds adversarial attack to the constraint and still obtain similar convergence attributes.

\end{proof}

\section{Numerical Experiments}\label{sec:Numerical}


We illustrate our theoretical results via simulations by considering a network comprising $n= 10$ agents with a fraction of $n$ acting in an adversarial manner. At each time step, the malicious agents use the algorithm in Equation \eqref{eqn:gradientmalicious}, with each malicious agent using a different attack vector.  
The strongly convex objective function to solve the problem in equation \eqref{eqn:quadratic} used for simulation is given by the following:  
\begin{equation}\label{eqn:simulationquadratic}
    f(x)=\frac{1}{2}x^Tx
\end{equation}
We examine the influence of the uniform quantizer used by the non-adversarial agents by varying the number of bits in order to examine the convergence to optimal solution or using the error. We consider attack vectors sent by adversarial nodes in conjunction with the uniform quantizer and observe that convergence to the neighborhood of the optimal solution is still achieved. We use a step size of $ \alpha = 0.7$ and attack vector entries in the interval $(0,1)$ as test values.

\begin{figure*}[!t]
     \centering
     \begin{subfigure}[t]{0.31\textwidth}
         \centering
          \includegraphics[width=1\linewidth]{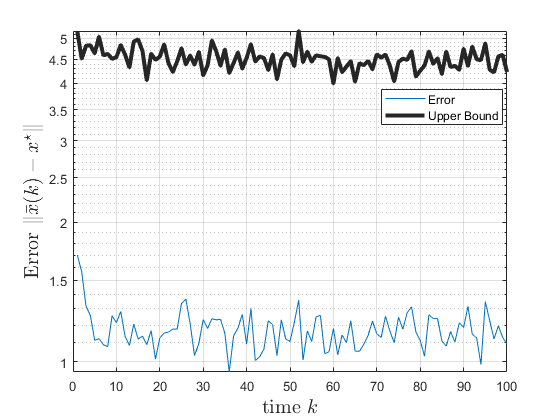}
    \caption{$7$ non-adversarial agents with $1$ bit}
    \label{fig:iteration-7-non1bit-diff}
     \end{subfigure}
     \hfill
     \begin{subfigure}[t]{0.31\textwidth}
         \centering
           \includegraphics[width=1\linewidth]{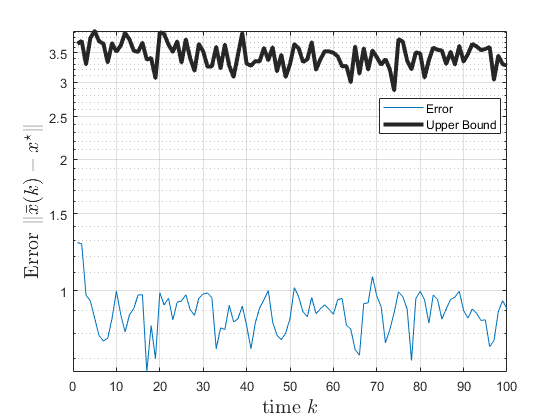}
    \caption{$7$ non-adversarial agents with $5$ bits.}
    \label{fig:iteration-7-non5bits-diff}
     \end{subfigure}
    \hfill
     \begin{subfigure}[t]{0.31\textwidth}
         \centering
        \includegraphics[width=1\linewidth]{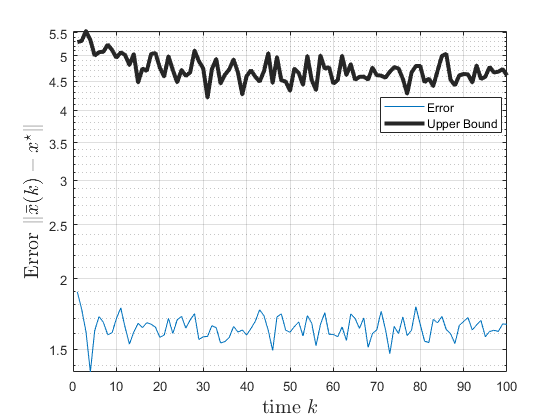}
    \caption{$3$ non-adversarial agents with $1$ bit}
    \label{fig:iteration-3-non1bit-diff}
     \end{subfigure}
        \caption{Quantization and Adversarial Simulation}
        \label{fig:Quantization and Adversarial Simulation}
\end{figure*}
To obtain more insight on the claims made in Section  \ref{sec:Convanalysiscentralized-BB}, we illustrate two scenarios ($1$ and $5$ bits uniform quantizer) where non-adversarial agents constitute $70\%$ of the total number of agents in the network and a scenario where non-adversarial agents account for $30\%$ of the total number of agents in the network. We note that the adversarial agents send different attack vectors in the interval $(0,1)$. As seen in figures \ref{fig:iteration-7-non1bit-diff}, \ref{fig:iteration-7-non5bits-diff} and \ref{fig:iteration-3-non1bit-diff}, non-adversarial agents are still able to approach the neighborhood of the optimal solution despite quantization and adversarial attack. When there are less adversarial agents in the network, we obtain a larger error with $1$ bit than with $5$ bits as seen in figures \ref{fig:iteration-7-non1bit-diff} and \ref{fig:iteration-7-non5bits-diff}. When there are more adversarial agents in the network as seen in figure \ref{fig:iteration-3-non1bit-diff}, we obtain a larger error than in figures \ref{fig:iteration-7-non1bit-diff} and \ref{fig:iteration-7-non5bits-diff} due to the constraints arising from a network congested with adversarial agents and using $1$ bit of information.
Finally, from Figure \ref{fig:Quantization and Adversarial Simulation}, we observed that the obtained iterates and theoretical bound are reasonably close. 

\section{Conclusion}
\label{sec:conclude}
We explore the performance of a distributed subgradient algorithm with two constraints described. By using a strongly convex function, an adequate step size and a uniform quantizer, we show convergence to the neighborhood of the optimal solution of the distributed optimization problem considered. We explore scenarios where adversarial agents upscale their estimates using different vector at each iteration. We also show that convergence to a neighborhood of the optimal solution is unaffected  even with one bit of information being exchanged by non-adversarial agents in a network. Numerical experiments affirm that when adversarial agents send an attack vector at each iteration, increasing the number of bits mostly leads to reduction of errors in approaching the optimal solution despite the presence of attack values. However, as the number of adversarial agents increase in the network, the convergence error increases. Our result holds for strongly convex functions under certain conditions and interested researchers should consider the problem described in this paper for non-strongly convex functions as an open problem.

\section{Appendix}
\subsection{Proof of Lemma 1}\label{error-projection-proof}
\begin{proof}
We begin with the relationship:
\begin{equation}\label{error-average}
    \bm{\bar\xi}(\bm
    (k) = \frac{1}{n}\sum_{i=1}^n \bm{\xi}_i(\bm
   h_i (k)).
   \end{equation}
   By squaring both sides of equation \eqref{error-average}, 
and using the bound of $\sum_{i=1}^n \left\|\bm{\xi}_i(\bm
    h_i(k))\right\|^2$ shown in \cite{9157925} as:
\begin{align}
   \sum_{i=1}^n \left\|\bm{\xi}_i(\bm
    h_i(k))\right\|^2 \le 8 \sum_{i=1}^n \|\Delta_i(k)\|^2 + 2 \bar L^2 \alpha^2(k)
\end{align}
we obtain 
 $\|\bm{\bar\xi}(\bm
    (k)\|^2 \le \frac{8}{n^2} \sum_{i=1}^n \left\|\Delta_i(k)\right\|^2 + \frac{2 \bar L^2 \alpha^2}{n^2}$. Equivalently, we have:
\begin{equation}\label{projection-norm-error-bound}
   \|\bm{\bar\xi}(\bm
    (k)\| \le \frac{\sqrt{8}}{n} \left(\sum_{i=1}^n \left\|\Delta_i(k)\right\|^2\right)^{1/2} + \frac{\sqrt{2} \bar L \alpha}{n}.
\end{equation}
If $\ell \le 1$, then $\|\Delta_i(k)\| \le 1$ and $\|\Delta_i(k)\|^2 \le 1$. So it yields
$\sum_{i=1}^n \|\Delta_i(k)\|^2 \le \sum_{i=1}^n 1 = n$.
From equation \eqref{projection-norm-error-bound}, 
\begin{equation*}
     \frac{\sqrt{8}}{n} \left(\sum_{i=1}^n \|\Delta_i(k)\|^2\right)^{1/2} \le \frac{\sqrt{8}\sqrt{n}}{n} = \frac{\sqrt{8}}{\sqrt{n}} = \sqrt{\frac{8}{n}}.
\end{equation*}
When $n\ge 1$, 
we obtain the bound:
\begin{equation}\label{error-sum-bound}
    \sqrt{8} \left(\sum_{i=1}^n \|\Delta_i(k)\|^2\right)^{1/2} \le \sqrt{8}  \sum_{i=1}^n \Delta_i(k).
\end{equation}
By dividing both sides of equation \eqref{error-sum-bound} by $n$, we obtain:
\begin{align*}
    \frac{\sqrt{8}}{n} \left(\sum_{i=1}^n \|\Delta_i(k)\|^2\right)^{1/2} \le \frac{\sqrt{8}}{n}  \sum_{i=1}^n \Delta_i(k) = \sqrt{8} \Delta(k),
\end{align*}

%
The norm of the error due to projection is bounded as:
\begin{equation}\label{projection-error-final}
     \|\bm{\bar\xi}(k)\| \le \sqrt{8} \Delta(k) + \sqrt{2} \frac{\bar L}{n}\alpha.
\end{equation}
By squaring both sides of equation \eqref{projection-error-final}, we obtain
    $\|\bm{\bar\xi}(k)\|^2 \le 8 \|\Delta(k)\|^2 + 2 \bar L^2 \alpha^2(k)$,
which consequently proves Lemma \ref{error-projection}.
\end{proof}


%

\bibliographystyle{IEEEtran}
{\small
\bibliography{mybib1.bib}}

\end{document}